\numberwithin{equation}{section}
\newenvironment{proof}{\noindent {\em {Proof}}.}{$\square$
\medskip}
\newtheorem{corollary}{Corollary}
\newtheorem{theorem}{Theorem}
\newtheorem{conjecture}{Conjecture}
\newtheorem{lm}{Lemma}
\begin{document}
\title{On the Signed Complete Graphs with Maximum Index}
\author{N. Kafai$^{a}$,~~F. Heydari$^{a}$,~~N. Jafari Rad$^{b}$,~~M. Maghasedi$^{a}$
}
\date{ }

%%% ----------------------------------------------------------------------
\maketitle \vspace{-5mm}

\begin{center}
\date {}{ \small \textit{$^a$Department of Mathematics, Karaj Branch, Islamic Azad University, Karaj, Iran
\\
$^b$ Department of Mathematics, Shahed University, Tehran, Iran\footnote{{\it E-mail addresses}: navid.kafai@kiau.ac.ir (N. Kafai), f-heydari@kiau.ac.ir (F. Heydari), n.jafarirad@gmail.com (N.J. Rad), maghasedi@kiau.ac.ir (M. Maghasedi).}}
}
\end{center}

\begin{abstract}
Let $\Gamma=(K_{n},H^-)$ be a signed complete graph whose negative edges induce a subgraph $H$. The index of $\Gamma$ is the largest eigenvalue of its adjacency matrix. In this paper we study the index of $\Gamma$ when $H$ is a unicyclic graph. We show that among all signed complete graphs of order $n>5$ whose negative edges induce a unicyclic graph of order $k$ and maximizes the index, the negative edges induce a triangle with all remaining vertices being pendant at the same vertex of the triangle.
\end{abstract}

~~~\noindent\textbf{Keywords:} Signed complete graph, Unicyclic graph, Index. \\

~~~\noindent\textbf{MSC:} 05C22, 05C50, 15A18.

\section{Introduction}
~~~Let $G$ be a simple graph with the vertex set $V(G)$ and the edge set $E(G)$. The \textit {order} of $G$ is defined as $\vert V(G)\vert $.
The degree of a vertex $v$ in $G$ is denoted by $\deg_G(v).$ A vertex of degree one is called a \textit{pendant vertex}. We denote the set of all neighbors of $v$ in $G$ by $N_G(v)$.
Let $K_n$ be the \textit {complete graph} of order $n$
and $K_{1,k}$ denote the \textit {star graph} of order $k+1$.
A tree containing exactly two non-pendant vertices is called a \textit{double-star}. A double-star with degree sequence $ (s+1,t+1,1,\ldots,1) $ is denoted by $D_{s,t}$. By $C_k$ we denote a cycle of length $k$. A \emph{unicyclic} graph is a connected graph containing exactly one cycle. A {\it cactus} is a connected graph in which any two cycles have at most one common vertex.

A \textit{signed graph} $\Gamma$ is an ordered pair $(G,\sigma)$, where $G$ is a simple graph (called the \textit {underlying graph}), and $\sigma : E(G) \longrightarrow \lbrace -,+\rbrace$ is a mapping defined on the edge set of $G$ (called the \textit{signature}). If all edges of a signed graph $(G,\sigma)$ are positive (resp. negative), then we denote it by $(G,+)$ (resp. $(G,-)$).
%A positive neighbor (resp. negative neighbor) of a vertex $v$ is a vertex that is connected to $v$
%with a positive (resp. negative) edge.
For a subset $X\subseteq V(\Gamma)$, the subgraph induced by $X$ is denoted by $\Gamma[X]$.
Let $A(G)=(a_{ij})$ be the adjacency matrix of $G$. The \textit{adjacency matrix} of a signed graph $\Gamma=(G,\sigma)$ is defined as a square matrix $A(\Gamma)=(a_{ij}^\sigma)$, where $a_{ij}^\sigma=\sigma (v_iv_j)a_{ij}$.
The characteristic polynomial of a matrix $A$ is denoted by $\varphi(A,\lambda)$. The characteristic polynomial
of $A(\Gamma)$ is called the \emph{characteristic polynomial} of the signed graph $\Gamma$ and denoted by $\varphi(\Gamma,\lambda)$.
Also, the spectrum of $A(\Gamma)$ is called the spectrum of $\Gamma$ and the largest eigenvalue is often called the \textit{index}.
%Let $\lambda$ be an eigenvalue of $\Gamma$. By $m_\Gamma(\lambda)$, we mean the multiplicity of $\lambda$.
The spectrum of signed graphs has been studied by many authors, for instance see \cite{Akbari2, Belardo, sta1, sta2}.

Let $\Gamma=(G,\sigma)$ be a signed graph and $v \in V(\Gamma)$. We obtain a new graph $ \Gamma^\prime$ from $ \Gamma $ if we change the signs of all edges incident with $v$. We call $ v $ a \textit {switching vertex}. A \textit {switching} of a signed graph $\Gamma$ is a graph that can be obtained by applying finitely many switching operations. We call two graphs $\Gamma$ and $\Gamma^\prime$ \textit{switching equivalent} if $\Gamma^\prime$ is a switching of $\Gamma$ and we write $\Gamma \sim \Gamma^\prime$. The adjacency matrices of two switching equivalent signed graphs $\Gamma$ and $\Gamma^\prime$ are similar and hence they have the same spectrum, see \cite{Zaslavsky2}.

In \cite{Koled}, the connected signed graphs of fixed order, size, and number of negative edges with maximum index have been studied. It was conjectured in \cite{Koled} that if $ \Gamma $ is a signed complete graph of order $n$ with $k$ negative edges, $k<n-1$ and $\Gamma$ has maximum index, then negative edges induce the signed star $K_{1,k}.$ In \cite{Akbari3}, the authors proved that this conjecture holds for signed complete graphs whose negative edges form a tree. Recently, Ghorbani et al. \cite{gm} proved the conjecture. Let $(K_{n},H^-)$ denote a signed complete graph of order $n$ whose negative edges induce a subgraph $H$. They introduced a family of graphs $H_{m,n}$ for positive integers $n$ and $m$ with $m\leq \lfloor \frac{n^2}{4}\rfloor$ and proved that
among the signed complete graphs with $n$ vertices and $m$ negative edges,
$(K_n,H^-)$ has the maximum index if and only if $H$ is isomorphic to a $H_{n,m}$.

In this paper we focus on the signed complete graphs whose negative edges induce a unicyclic graph. We show that among all signed complete graphs of order $n>5$ whose negative edges induce a unicyclic graph of order $k$ and maximizes the index, the negative edges induce a triangle with all remaining vertices being pendant at the same vertex of the triangle. This result with a result of \cite{Akbari3} on trees lead to a conjecture on signed complete graphs whose negative edges induce a cactus graph.

\section{Preliminaries}
~~~The spectral theory of signed graphs has more varieties than unsigned graphs. But an important tool works in a similar way for signed graphs, which is a consequence of {\rm \cite[{Theorem} $1.3.11$]{Cvetkovic}}.

\begin{theorem}\rm{(Interlacing Theorem for signed graphs)}\label{Interlac}
Let $\Gamma$ be a signed graph with $n$ vertices and eigenvalues $\lambda _{1}\geq \cdots \geq \lambda_{n}$, and let $\Gamma^\prime$ be an induced subgraph of $\Gamma$ with $m$ vertices. If the eigenvalues of $\Gamma^\prime$ are $\mu _{1}\geq \cdots \geq \mu_{m}$, then $\lambda_{n-m+i} \leq \mu_i \leq \lambda_i$ for $i=1,\ldots,m.$
%In particular, if $\Gamma^\prime=\Gamma\setminus v$, then we have $m=n-1$ and:
%$$\lambda_n\leq \mu_{n-1}\leq \lambda_{n-1}\leq \cdots \leq \lambda_2\leq \mu_{1}\leq \lambda_1.$$
\end{theorem}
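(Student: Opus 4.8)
The plan is to reduce the statement to the classical Cauchy interlacing theorem for real symmetric matrices, which is exactly \cite[Theorem $1.3.11$]{Cvetkovic}. The first thing to observe is that the adjacency matrix $A(\Gamma)=(a_{ij}^\sigma)$ is real and symmetric: each entry $a_{ij}^\sigma=\sigma(v_iv_j)a_{ij}$ is real, and since $\sigma(v_iv_j)=\sigma(v_jv_i)$ and $a_{ij}=a_{ji}$, we get $a_{ij}^\sigma=a_{ji}^\sigma$. Hence all eigenvalues $\lambda_1\geq\cdots\geq\lambda_n$ are real, and both the spectral theorem and the Courant--Fischer min--max characterization are available.

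The second step is to identify $A(\Gamma^\prime)$ as a principal submatrix of $A(\Gamma)$. Let $X\subseteq V(\Gamma)$ with $|X|=m$ and $\Gamma^\prime=\Gamma[X]$. The key point is that the sign $\sigma(v_iv_j)$ of an edge, as well as the underlying entry $a_{ij}$, depends only on that edge and is therefore unchanged when we pass to the induced subgraph. Consequently the entry of $A(\Gamma^\prime)$ indexed by $v_i,v_j\in X$ equals the corresponding entry of $A(\Gamma)$, so $A(\Gamma^\prime)$ is precisely the $m\times m$ principal submatrix of $A(\Gamma)$ obtained by deleting the rows and columns indexed by $V(\Gamma)\setminus X$.

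The final step is to invoke Cauchy interlacing. Let $W\subseteq\mathbb{R}^n$ be the $m$-dimensional coordinate subspace spanned by the basis vectors corresponding to $X$, so that the Rayleigh quotient of $A(\Gamma^\prime)$ on $\mathbb{R}^m$ coincides with that of $A(\Gamma)$ restricted to $W$. Using
\[
\lambda_i=\max_{\substack{S\subseteq\mathbb{R}^n\\ \dim S=i}}\ \min_{0\neq x\in S}\frac{x^\top A(\Gamma)x}{x^\top x},\qquad
\mu_i=\max_{\substack{T\subseteq W\\ \dim T=i}}\ \min_{0\neq x\in T}\frac{x^\top A(\Gamma)x}{x^\top x},
\]
and noting that the $i$-dimensional subspaces of $W$ form a subfamily of all $i$-dimensional subspaces of $\mathbb{R}^n$, one immediately gets $\mu_i\leq\lambda_i$. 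The inequality $\lambda_{n-m+i}\leq\mu_i$ follows in the same way from the dual min--max formula $\lambda_j=\min_{\dim S=n-j+1}\max_{0\neq x\in S}\frac{x^\top A(\Gamma)x}{x^\top x}$, taking $j=n-m+i$ and again restricting the optimizing subspaces to lie inside $W$; equivalently, it is the $\mu_i\leq\lambda_i$ bound applied to $-A(\Gamma)$. This yields $\lambda_{n-m+i}\leq\mu_i\leq\lambda_i$ for $i=1,\ldots,m$. Since the conclusion is a purely matrix-theoretic fact once the two reductions are in place, there is no genuine obstacle here; the only point requiring care is the verification in the second step that the signature restricts correctly, so that ``induced subgraph'' translates exactly into ``principal submatrix'' with no change of signs.
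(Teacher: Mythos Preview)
Your proposal is correct and follows exactly the route the paper indicates: the paper does not give a proof at all but simply records the theorem as a consequence of \cite[Theorem~1.3.11]{Cvetkovic}, i.e.\ the Cauchy interlacing theorem for real symmetric matrices. You have supplied the two reductions the paper leaves implicit --- that $A(\Gamma)$ is real symmetric and that $A(\Gamma^\prime)$ is a principal submatrix of $A(\Gamma)$ --- and then invoked Courant--Fischer, which is precisely the content of the cited result.
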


The following result will be useful in the sequel.

\begin{lm}\label{Kr,s} {\rm \cite[Lemma $3$]{Akbari3}}
Let $\Gamma=(K_{n},K_{1,k}^-)$ be a signed complete graph. Then
$$\varphi (\Gamma,\lambda)=(\lambda +1)^{n-3} \Big(\lambda ^3+(3-n)\lambda ^2 +(3-2n)\lambda +4k(n-k-1)+1-n\Big) .$$
\end{lm}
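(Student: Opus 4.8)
The plan is to exploit the high symmetry of $\Gamma=(K_n, K_{1,k}^-)$ via an equitable partition. Label the vertices so that $c$ is the center of the negative star, $L=\{\ell_1,\dots,\ell_k\}$ are its leaves, and $O=\{o_1,\dots,o_{n-k-1}\}$ are the remaining vertices. Every edge of $K_n$ carries sign $+1$ except the $k$ edges $c\ell_i$, which carry $-1$; thus $A(\Gamma)$ agrees with the ordinary adjacency matrix $J_n-I_n$ of $K_n$ except that the $(c,\ell_i)$ and $(\ell_i,c)$ entries equal $-1$. The partition $\{c\}\cup L\cup O$ is equitable for $A(\Gamma)$: reading off the signed row sums into each cell, the quotient (divisor) matrix is
\[
B=\begin{pmatrix} 0 & -k & n-k-1 \\ -1 & k-1 & n-k-1 \\ 1 & k & n-k-2 \end{pmatrix},
\]
whose three eigenvalues are eigenvalues of $A(\Gamma)$.

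Next I would produce the remaining $n-3$ eigenvalues explicitly. Take any vector $x$ supported on $L$ whose entries sum to $0$ (and which vanishes on $c$ and $O$); a direct check using the row structure of $A(\Gamma)$ gives $A(\Gamma)x=-x$, because the contribution at $c$ and at each $o_j$ is a multiple of $\sum_i x_{\ell_i}=0$, while at $\ell_j$ one obtains $\sum_{i\ne j}x_{\ell_i}=-x_{\ell_j}$. This furnishes a $(k-1)$-dimensional eigenspace for the eigenvalue $-1$. The same computation applied to vectors supported on $O$ with zero sum gives a further $(n-k-2)$-dimensional eigenspace for $-1$. These two families are independent of each other and of the cell-constant vectors spanning the quotient, so $-1$ is an eigenvalue of multiplicity at least $(k-1)+(n-k-2)=n-3$, which together with the three eigenvalues of $B$ accounts for all $n$ eigenvalues. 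Hence
\[
\varphi(\Gamma,\lambda)=(\lambda+1)^{n-3}\,\det(\lambda I_3-B).
\]

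It then remains to verify that $\det(\lambda I_3-B)$ equals the stated cubic. Writing $a=n-k-1$, so that $k+a=n-1$ and $ak=k(n-k-1)$, I would expand the $3\times 3$ determinant (say along the first column) to reach $\lambda^3+(2-(k+a))\lambda^2+(1-2(k+a))\lambda+(4ak-(k+a))$ and then substitute, yielding coefficients $3-n$, $3-2n$, and $4k(n-k-1)+1-n$, matching the claim. I expect the only genuine bookkeeping to lie in this final determinant expansion and in confirming that the listed eigenspaces are independent and exhaust the whole space; everything else is a routine consequence of the equitable partition. Should one prefer to avoid the partition language, an alternative is to compute $\det(\lambda I_n-A(\Gamma))$ directly, subtracting equal rows within $L$ and within $O$ to factor out $(\lambda+1)^{n-3}$ before collapsing to the same $3\times3$ determinant.
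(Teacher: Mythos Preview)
Your argument is correct: the three-cell partition $\{c\},L,O$ is equitable for $A(\Gamma)$, the quotient matrix $B$ is as you wrote, the zero-sum vectors on $L$ and on $O$ give $(k-1)+(n-k-2)=n-3$ independent $(-1)$-eigenvectors orthogonal to the cell-constant space, and the determinant expansion yields exactly the stated cubic. (Minor remark: when $k=n-1$ the cell $O$ is empty and one should use the two-cell partition instead; the resulting quadratic times $(\lambda+1)^{n-2}$ agrees with the formula, since the cubic then picks up an extra $(\lambda+1)$ factor.)

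As for comparison with the paper: this lemma is quoted from \cite{Akbari3} without proof here, so there is no in-paper argument to match. That said, your method is precisely the machinery the present paper formalizes in Theorem~\ref{charpoly} and then applies in Lemmas~\ref{q1}, \ref{qst}, \ref{u1}; the only cosmetic difference is that Theorem~\ref{charpoly} extracts the $(\lambda+1)$ factors via row/column operations on $\lambda I-A(\Gamma)$, whereas you exhibit the corresponding eigenvectors directly. Both routes are standard and equivalent.
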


Also, we need to introduce an additional notation. Assume that $A$ is a symmetric real matrix of order $n$ and $\{X_1,\ldots,X_m\}$ is a partition of $[n] =\{1,\ldots,n\}.$ Let $\{X_1,\ldots,X_m\}$ partition the rows and columns of $A,$ as follows,
$$\begin{bmatrix} A_{1,1}&\cdots&A_{1,m} \\
\vdots&&\vdots\\
A_{m,1}&\cdots&A_{m,m}\end{bmatrix},$$
where $A_{i,j}$ denotes the submatrix of $A$ formed by rows in $X_i$ and the columns in $X_j.$ Then the $m \times m$ matrix $B= (b_{ij})$ is called the \textit{quotient matrix} related to that partition, where $b_{ij}$ denotes the average row sum of $A_{i,j}$. If the row sum of each $A_{i,j}$ is constant, then the partition is called \textit{equitable}, see \cite{ham}.

If $X=(x_1,\ldots,x_n)^T$ is an eigenvector corresponding to the eigenvalue $\lambda$ of a signed
graph $\Gamma=(G,\sigma)$, then we assume that the component $x_v$ corresponds to the vertex $v$. So the following is the {\it eigenvalue equation} for $v$: $$\lambda x_v=\sum_{u\in N_{G}(v)}\sigma(uv)x_u.$$

The next lemma is one of the most used tools in the identifications of graphs with maximum index.

\begin{lm}{\rm \cite[Lemma 5.1(i)]{Koled}}
\label{rotation}
Let $u$, $v$ and $w$ be distinct vertices of a signed graph $\Gamma$ and let $X=(x_1,\ldots,x_n)^T$ be an eigenvector corresponding to the index $\lambda_1(\Gamma)$. Let $\Gamma'$ be obtained
%either by rotating the positive edge $rs$ to the non-edge position $rt$ or by rotating the negative edge $rt$ to the non-edge position $rs$ or
by reversing the sign of the positive edge $uv$ and the negative edge $uw$. If $x_u \geq 0$, $x_v \leq x_w$ or $x_u \leq 0$, $x_v\geq x_w$, then $\lambda_1(\Gamma) \leq \lambda_1(\Gamma')$. If at least one inequality is strict, then $\lambda_1(\Gamma) < \lambda_1(\Gamma')$.
\end{lm}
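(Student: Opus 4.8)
The plan is to run the standard variational (Rayleigh quotient) argument on the rank-two perturbation that turns $\Gamma$ into $\Gamma'$. Write $A=A(\Gamma)$ and $A'=A(\Gamma')$ and normalize $X$ so that $X^TX=1$; since $X$ is a $\lambda_1(\Gamma)$-eigenvector of $A$ we then have $\lambda_1(\Gamma)=X^TAX$. First I would record that $\Gamma$ and $\Gamma'$ differ only on the two edges $uv$ and $uw$: reversing the positive edge $uv$ changes $a^\sigma_{uv}$ from $+1$ to $-1$, and reversing the negative edge $uw$ changes $a^\sigma_{uw}$ from $-1$ to $+1$. Hence $A'-A$ is symmetric with the only nonzero entries $(A'-A)_{uv}=(A'-A)_{vu}=-2$ and $(A'-A)_{uw}=(A'-A)_{wu}=2$. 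Evaluating the associated quadratic form gives $X^T(A'-A)X=4\,x_u(x_w-x_v)$. Under either hypothesis the factors $x_u$ and $x_w-x_v$ have the same sign (possibly with one of them zero), so $X^T(A'-A)X\ge 0$. The Rayleigh bound then yields $\lambda_1(\Gamma')\ge X^TA'X=\lambda_1(\Gamma)+4\,x_u(x_w-x_v)\ge\lambda_1(\Gamma)$, which is the weak inequality.

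For the strict inequality I would argue through the equality case rather than trying to force $X^T(A'-A)X>0$ (which can fail, e.g.\ when $x_u>0$ but $x_v=x_w$). Suppose $\lambda_1(\Gamma')=\lambda_1(\Gamma)$; then equality holds throughout the chain $\lambda_1(\Gamma')\ge X^TA'X\ge\lambda_1(\Gamma)$, and in particular $X^TA'X=\lambda_1(\Gamma')\,X^TX$, so $X$ attains the maximum of the Rayleigh quotient of $A'$ and is therefore a $\lambda_1(\Gamma')$-eigenvector of $A'$. Combining $A'X=\lambda_1(\Gamma')X$ with $AX=\lambda_1(\Gamma)X=\lambda_1(\Gamma')X$ and subtracting gives $(A'-A)X=0$. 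From the explicit form of $A'-A$ its three possibly nonzero coordinates are $((A'-A)X)_u=2(x_w-x_v)$, $((A'-A)X)_v=-2x_u$, and $((A'-A)X)_w=2x_u$; requiring all of them to vanish forces $x_u=0$ and $x_v=x_w$, i.e.\ both hypothesis inequalities are equalities. Contrapositively, if at least one inequality is strict then $\lambda_1(\Gamma')\ne\lambda_1(\Gamma)$, and together with the weak inequality this gives $\lambda_1(\Gamma')>\lambda_1(\Gamma)$.

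I expect the only delicate point to be this strict case. The tempting route is to deduce strictness from strict positivity of $X^T(A'-A)X$, but a single strict hypothesis inequality does not make that product positive. The correct route is to observe that equality in the Rayleigh bound forces $X$ to be a common $\lambda_1$-eigenvector of both $A$ and $A'$, so that the perturbation $A'-A$ must annihilate $X$; the explicit three-coordinate computation then shows this is impossible unless $x_u=0$ and $x_v=x_w$ hold simultaneously, which is exactly the configuration excluded by the hypothesis.
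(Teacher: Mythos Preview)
The paper does not prove this lemma; it is quoted from \cite{Koled} and stated without proof. Your argument is the standard Rayleigh-quotient proof and is correct, including the handling of the strict case via the equality characterization (which, as you note, is the right route since a single strict hypothesis does not by itself make $X^T(A'-A)X$ positive).
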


If $u$, $v$ and $w$ are the vertices given in Lemma \ref{rotation}, then $R(u, v, w)$ refers to the relocation described in the lemma.

\section{$(K_n,U^-)$ with maximum index}
One classical problem of graph spectra is to identify the maximal graphs with respect to the index in a given class of graphs. In this section, we determine $(K_n,U^-)$ with maximum index, where $U$ is a unicyclic subgraph of $K_n$ of order $k$. We begin with the following lemma.

\begin{lm}
\label{q1}
Let $\Gamma=(K_n,Q_1^-)$ be a signed complete graph with $k\geq 4$ negative edges, where $Q_1$ is the graph depicted in Fig. \ref{figure1}.
Then $$\varphi (\Gamma,\lambda)=(\lambda +1)^{n-5} \Big(\lambda^5 +(5-n)\lambda^4+(10-4n)\lambda ^3+(12k-6n+4ku-38)\lambda ^2 +$$
$$\ \ \ \ \ \ \ \ \ \ \ \ \ \ \  \ \ \ \ \ \ \ \ \ \ \ \ \ \ \ (24k-4n+8ku-91)\lambda +127n-116k-28ku-47\Big),$$
where $u=n-k$.
\end{lm}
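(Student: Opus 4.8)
The plan is to exploit the symmetry of $Q_1$ through an equitable partition and then read off the whole spectrum from the associated quotient matrix. Writing the $4$-cycle of $Q_1$ (as in Fig.\ \ref{figure1}) as $v_1v_2v_3v_4$ with the $k-4$ pendant vertices $p_1,\dots,p_{k-4}$ all attached to $v_1$, and letting $w_1,\dots,w_u$ (with $u=n-k$) be the vertices of $K_n$ outside $Q_1$, every edge not lying in $Q_1$ is positive. First I would record the partition $\pi=\{C_1,\dots,C_5\}$ with $C_1=\{v_1\}$, $C_2=\{v_2,v_4\}$, $C_3=\{v_3\}$, $C_4=\{p_1,\dots,p_{k-4}\}$ and $C_5=\{w_1,\dots,w_u\}$, and check that $\pi$ is equitable by computing, for a vertex of $C_i$, the signed number of its neighbours in each $C_j$. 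The reflection of the cycle fixing $v_1,v_3$ and swapping $v_2,v_4$, together with the interchangeability of the pendants and of the outside vertices, makes these signed row sums constant (indeed each off-diagonal block of $A(\Gamma)$ has constant sign), and produces the quotient matrix
\[
B=\begin{pmatrix}
0 & -2 & 1 & -(k-4) & u\\
-1 & 1 & -1 & k-4 & u\\
1 & -2 & 0 & k-4 & u\\
-1 & 2 & 1 & k-5 & u\\
1 & 2 & 1 & k-4 & u-1
\end{pmatrix}.
\]

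Next I would split $\mathbb{R}^n=W\oplus W^{\perp}$, where $W$ is the $5$-dimensional space of vectors that are constant on each cell. Since $\pi$ is equitable we have $AS=SB$ for the characteristic matrix $S$ whose columns are the cell-indicators, so $W=\operatorname{col}(S)$ is $A(\Gamma)$-invariant and $A(\Gamma)|_W$ is represented by $B$; hence the characteristic polynomial of $A(\Gamma)|_W$ is $\det(\lambda I-B)$. The key structural observation is that every cell $C_i$ is an independent set of $Q_1$, so inside each cell all edges of $K_n$ are positive. For $y\in W^{\perp}$ (that is, $\sum_{v\in C_i}y_v=0$ for each $i$) and $a\in C_i$, every cell $C_j$ with $j\ne i$ contributes $0$ to $(A(\Gamma)y)_a$ because the sign is constant and $y$ sums to zero on $C_j$, while the contribution of $C_i$ itself is $\sum_{b\in C_i,\,b\ne a}y_b=-y_a$. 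Thus $A(\Gamma)|_{W^{\perp}}=-I$, which supplies the factor $(\lambda+1)^{n-5}$ and shows that the rest of the spectrum is exactly that of $B$. Consequently $\varphi(\Gamma,\lambda)=(\lambda+1)^{n-5}\det(\lambda I-B)$.

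It then remains to evaluate $\det(\lambda I-B)$ and simplify. I would first use the two singleton-cell columns ($C_1,C_3$) to clear entries, then expand along the rows governed by $C_4$ and $C_5$, keeping $u$ and $k-4$ as formal parameters, so that the result is a quintic in $\lambda$ with coefficients polynomial in $n,k,u$; substituting $u=n-k$ and collecting terms should reproduce the stated polynomial. As partial checks one verifies $\operatorname{tr}B=n-5$ (matching the $\lambda^4$-coefficient), that the $\lambda^{n-2}$-coefficient of the full polynomial equals $-\binom{n}{2}$, and that $\det(\lambda I-B)$ acquires an extra factor $(\lambda+1)$ exactly when $k=4$ — the case where $C_4$ is empty and the partition collapses to four cells.

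The main obstacle is purely computational: carrying the $5\times5$ symbolic determinant through to the claimed quintic without error, since the entries mix the parameters $k$ and $u=n-k$. The conceptual steps — verifying equitability and identifying the $(\lambda+1)^{n-5}$ eigenspace — are immediate once the partition is in hand; the care is entirely in the bookkeeping of the determinant expansion, which I would control by eliminating the singleton-cell columns before expanding.
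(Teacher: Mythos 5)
Your proposal is correct, but it takes a genuinely different route from the paper's. The paper proves Lemma \ref{q1} with essentially no computation: switching $\Gamma$ at the vertex $v_1$ reverses the signs of all edges at $v_1$, turning $(K_n,Q_1^-)$ into $(K_n,D_{n-k,2}^-)$ when $k<n$ and into $(K_n,K_{1,3}^-)$ when $k=n$, after which the characteristic polynomial is read off from the known formulas for double stars in \cite{Akbari3} and for stars in Lemma \ref{Kr,s}. You instead compute from scratch via an equitable partition and its quotient matrix --- precisely the technique the paper reserves for Lemma \ref{qst}, where no switching shortcut exists --- and your $W\oplus W^{\perp}$ decomposition with $A(\Gamma)|_{W^{\perp}}=-I$ is a correct, self-contained re-derivation of what the paper packages as Theorem \ref{charpoly} (valid here because each cell induces an all-positive clique and between-cell signs are constant). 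Your quotient matrix $B$ is right: all five rows check against Fig.~\ref{figure1}, and a spot check at $n=7$, $k=5$ gives $\operatorname{tr}B=2$, second elementary symmetric function $-18$, and $\det B=18$, all matching the stated quintic $\lambda^5-2\lambda^4-18\lambda^3+20\lambda^2+81\lambda-18$; the remaining symbolic expansion is routine bookkeeping at the same level of detail the paper itself permits in Lemma \ref{qst}. What the switching argument buys is brevity and reuse of prior results; what yours buys is independence from \cite[Theorem 4, Remark 5]{Akbari3} and one uniform computation.

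One caveat on degenerate cells. Your five-cell partition is genuine only for $5\le k\le n-1$: when $k=4$ the cell $C_4$ is empty (you flag this), but when $k=n$ the cell $C_5$ is also empty, a case you do not mention and which the paper handles separately. Evaluating the stated quintic at $\lambda=-1$ gives $32u(4-k)$, so the extra factor $\lambda+1$ appears exactly when $k=4$ \emph{or} $u=0$; your remark that it occurs ``exactly when $k=4$'' silently assumes $u\ge 1$. Both boundary cases are repaired the same way: drop the empty cell, verify that the four-cell quotient's quartic times $\lambda+1$ equals the quintic (which the vanishing at $\lambda=-1$ makes possible), exactly as the paper does for $k=n$ in Lemmas \ref{q1} and \ref{qst}. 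This is a small completeness gap, not a flaw in the method.
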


\begin{proof}
{First assume that $k< n$. By switching $\Gamma$ at vertex $v_1$, one can deduce that $\Gamma=(K_n,Q_1^-)$ is switching equivalent to $(K_n, D_{n-k,2}^-)$. Thus the characteristic polynomials of $\Gamma=(K_n,Q_1^-)$ and $(K_n, D_{n-k,2}^-)$ are the same. Hence by \cite[Theorem $4$]{Akbari3} and \cite[Remark $5$]{Akbari3}, the result holds. Now, assume that $k=n$. Again by switching $\Gamma$ at vertex $v_1$, we conclude that $\Gamma=(K_n,Q_1^-)$ is switching equivalent to $(K_n, K_{1,3}^-)$. Therefore, by Lemma \ref{Kr,s},
$$\varphi (\Gamma,\lambda)=(\lambda +1)^{n-3} \Big(\lambda ^3+(3-n)\lambda ^2 +(3-2n)\lambda +11n-47\Big)=$$
$$(\lambda +1)^{n-5} \Big(\lambda^5 +(5-n)\lambda^4+(10-4n)\lambda ^3+(6n-38)\lambda ^2 +(20n-91)\lambda +11n-47\Big).$$
Hence the proof is complete.}
\end{proof}

\begin{figure}[h!]
\centering
		\begin{minipage}[b]{.4\textwidth}
		\centering
		\begin{tikzpicture}[scale=1.2, baseline={(0, -1.15)}]
			\draw (0,0)--(-1,0)--(-1,1)--(0,1);
			\draw (0,0)--(0,1);
			\begin{scope}[shift={(0,1)}]
				\draw \foreach \x in {50,130}{(\x:1)--(0:0)};
				\fill[black] \foreach \x in {50,130} {(\x:1) circle (0.05)};
				\node at (90:1.2) {\small $k-4$};
                \draw [decorate,decoration={brace,amplitude=5pt,mirror},xshift=0pt,yshift=0pt](.7,.85) -- (-.7,.85) node[black,midway,yshift=-.3cm] {\footnotesize $ $};
			\end{scope}
            \draw [dotted] (.6, 1.77)-- (-.6,1.77);
			\fill[black] (0,1) circle (0.05) node[right]{\small $v_1$};
			\fill[black] (0,0) circle (0.05) node[right]{\small $v_2$};
			\fill[black] (-1,0) circle (0.05) node[left]{\small $v_3$};
			\fill[black] (-1,1) circle (0.05) node[left]{\small $v_4$};
			%\node at (1.1,0.5) {\small $k=n_1'+4$};
			\node at (0.3,-0.7) {${{Q}_{1}}\qquad\qquad$};
		\end{tikzpicture}
		%\caption*{${{\dot{N}}_{1}}\qquad\qquad$}
		\end{minipage}
		\begin{minipage}[b]{.4\textwidth}
		\centering
		\begin{tikzpicture}[scale=1.2]
			\draw (0,0)--(-1,0)--(-1,1)--(0,1);
			\draw (0,0)--(0,1);
            \draw [decorate,decoration={brace,amplitude=5pt,mirror},xshift=0pt,yshift=0pt](.4,2) -- (-.4,2) node[black,midway,yshift=-.3cm] {\footnotesize $ $};
			\begin{scope}[shift={(0,1)}]
				\draw \foreach \x in {70,110}{(\x:1)--(0:0)};
				\fill[black] \foreach \x in {70,110} {(\x:1) circle (0.05)};
				\node at (90:1.3) {\small $s$};
			\end{scope}
            \draw [dotted] (.3, 1.95)-- (-.3, 1.95);
			%\draw \foreach \x in {-5,-25}{(\x:1)--(0:0)};
            \draw (0,0)--(1,0);
            \draw (0,0)--(1,-.5);
            \draw [decorate,decoration={brace,amplitude=5pt,mirror},xshift=0pt,yshift=0pt](1.05,-.55) -- (1.05,.05) node[black,midway,yshift=-.3cm] {\footnotesize $ $};
			%\fill[black] \foreach \x in {-5,-25} {(\x:1) circle (0.05)};
            \fill[black] (1,0) circle (0.05) node[right]{\small $ $};
            \fill[black] (1,-.5) circle (0.05) node[right]{\small $ $};
            \draw [dotted] (1, 0)-- (1, -.5);
			\node at (-10:1.4) {\small $t$};
			\fill[black] (0,1) circle (0.05) node[right]{\small $v_1$};
			\fill[black] (0,0) circle (0.05) node[above right]{\small $v_2$};
			\fill[black] (-1,0) circle (0.05) node[left]{\small $v_3$};
			\fill[black] (-1,1) circle (0.05) node[left]{\small $v_4$};
			\node at (1.5,0.6) {\small $k=s+t+4$};
			\node at (0.5,-0.7) {${{Q(s,t)}}\qquad\qquad$};
		\end{tikzpicture}
		%\caption*{${{\dot{N}}_{2}}\qquad\qquad$}
		\end{minipage}
\caption{The unicyclic graphs $Q_1$, $Q(s,t)$.}
\label{figure1}
	\end{figure}
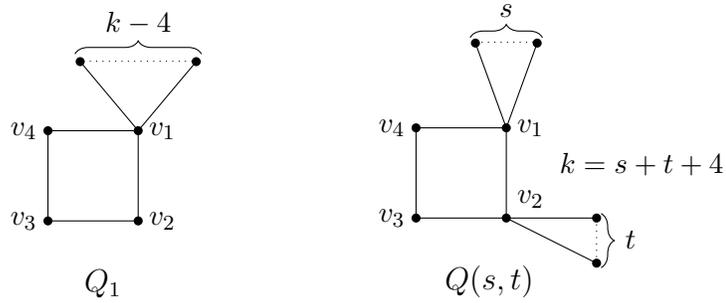

Let $\Gamma =(K_{n},\sigma)$ be a signed complete graph. Let $\sqcap=\{X_1,\ldots,X_p,
X_{p+1},\ldots,$
$X_{p+q}\}$ be a partition of $V(\Gamma)$ such that
all edges between $X_i$ and $X_j$ have the same sign for each $i, j$, $\Gamma[X_i]=(K_{n_i},+)$ for $i=1,\ldots,p$, and $\Gamma[X_i]=(K_{n_i},-)$ for $i=p+1,\ldots,p+q$, where $|X_i|=n_i$ for $i=1,\ldots,p+q$.
Obviously, $\sqcap$ is an equitable partition of $V(\Gamma)$.
Moreover, we have the next theorem.

\begin{theorem}\label{charpoly}
Let $\Gamma =(K_{n},\sigma)$ be a signed complete graph. Let $\sqcap=\{X_1,\ldots,X_p,$
$X_{p+1},\ldots,X_{p+q}\}$ be a partition of $V(\Gamma)$ with the above properties and $B$ be the quotient matrix of $A(\Gamma)$ related to $\sqcap$.
If $m_1=\sum _{i=1} ^{p}n_i$ and $m_2=\sum _{i=p+1} ^{p+q}n_i$, then
$$\varphi (\Gamma,\lambda)=(\lambda +1)^{m_1-p}(\lambda -1)^{m_2-q}\varphi (B,\lambda).$$
\end{theorem}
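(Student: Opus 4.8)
The plan is to exploit the equitable partition $\sqcap$ directly, decomposing $\mathbb{R}^n$ into three $A(\Gamma)$-invariant subspaces and reading off the action of $A(\Gamma)$ on each. First I would record the block structure of $A(\Gamma)$ dictated by the hypotheses on $\sqcap$: for a positive block $X_i$ (with $i\le p$) the diagonal block is $J_{n_i}-I_{n_i}$, for a negative block $X_i$ (with $i>p$) it is $I_{n_i}-J_{n_i}$, and the off-diagonal block between $X_i$ and $X_j$ is $\sigma_{ij}J_{n_i\times n_j}$ for a single sign $\sigma_{ij}\in\{+1,-1\}$, since all edges between $X_i$ and $X_j$ agree in sign.

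Next I would set up the orthogonal decomposition of $\mathbb{R}^n$ relative to the partition. On each block write $\mathbb{R}^{n_i}=\langle \mathbf{1}_{X_i}\rangle \oplus \mathbf{1}_{X_i}^{\perp}$, where $\mathbf{1}_{X_i}^{\perp}$ consists of the vectors summing to zero on $X_i$. Let $\mathcal{C}$ be the span of the block indicator vectors (vectors constant on each block), let $W_-$ be the space of vectors vanishing off the positive blocks and summing to zero on each positive block, and let $W_+$ be its analogue for the negative blocks. Then $\dim\mathcal{C}=p+q$, $\dim W_-=\sum_{i\le p}(n_i-1)=m_1-p$, and $\dim W_+=\sum_{i>p}(n_i-1)=m_2-q$; these spaces are mutually orthogonal with dimensions summing to $n$, so $\mathbb{R}^n=\mathcal{C}\oplus W_-\oplus W_+$.

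The heart of the argument, and the step I expect to carry the weight, is checking that each summand is $A(\Gamma)$-invariant and computing the restricted action. For any zero-sum vector $x$ the between-block contributions cancel, since $\sum_{u\in X_j}x_u=0$ forces $\sigma_{ij}\sum_{u\in X_j}x_u=0$, so only the diagonal block acts. On a positive block $(J_{n_i}-I_{n_i})x=-x$, and on a negative block $(I_{n_i}-J_{n_i})x=x$; hence $A(\Gamma)$ restricts to $-I$ on $W_-$ and to $+I$ on $W_+$, contributing the factors $(\lambda+1)^{m_1-p}$ and $(\lambda-1)^{m_2-q}$. For $\mathcal{C}$ I would introduce the characteristic matrix $S$ whose columns are the indicator vectors $\mathbf{1}_{X_i}$; the equitability of $\sqcap$ (already noted above) is precisely the identity $A(\Gamma)S=SB$, so $\mathcal{C}=\mathrm{col}(S)$ is invariant, and since $S$ has full column rank the restriction of $A(\Gamma)$ to $\mathcal{C}$ is similar to $B$, giving the factor $\varphi(B,\lambda)$.

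Finally, because $\mathcal{C}$, $W_-$, $W_+$ are $A(\Gamma)$-invariant and their direct sum is all of $\mathbb{R}^n$, the characteristic polynomial is the product of the three restricted characteristic polynomials, yielding $\varphi(\Gamma,\lambda)=(\lambda+1)^{m_1-p}(\lambda-1)^{m_2-q}\varphi(B,\lambda)$. The only genuinely load-bearing points are that the diagonal blocks take exactly the stated form, so that $-1$ and $+1$ are the within-block eigenvalues on zero-sum vectors, and that each between-block sign is constant, so those contributions vanish; both are guaranteed by the defining properties of $\sqcap$, which makes this the natural place to verify the hypotheses are used exactly once and essentially.
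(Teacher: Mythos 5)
Your proof is correct, but it takes a genuinely different route from the paper. The paper works directly with the determinant $\det(\lambda I - A(\Gamma))$ and performs explicit elementary row and column operations: within each positive block it subtracts the last row of the block from the rows above and then adds the first $n_i-1$ columns to the last, which peels off a factor $(\lambda+1)^{n_i-1}$ (and analogously $(\lambda-1)^{n_i-1}$ for negative blocks), and after treating all blocks the surviving determinant is exactly $\varphi(B,\lambda)$. Your argument instead runs the standard equitable-partition machinery: you decompose $\mathbb{R}^n$ into the column space $\mathcal{C}$ of the characteristic matrix $S$ plus the zero-sum spaces $W_-$ and $W_+$, verify $A(\Gamma)$ acts as $-I$ on $W_-$ and $+I$ on $W_+$ (your check that the between-block contributions $\sigma_{ij}J$ annihilate zero-sum vectors is exactly the load-bearing step, and it is right), and use $A(\Gamma)S=SB$ with $S$ of full column rank to see the restriction to $\mathcal{C}$ is similar to $B$; the factorization of $\varphi(\Gamma,\lambda)$ over the invariant direct sum then gives the result. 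The row-operation proof is more elementary and self-contained (it needs nothing beyond determinant manipulations), while yours buys strictly more: it exhibits explicit eigenvectors, shows that $W_-$ and $W_+$ lie inside genuine eigenspaces for $-1$ and $+1$ rather than merely contributing factors, and identifies the rest of the spectrum of $\Gamma$ with the spectrum of $B$, which is the conceptual reason the theorem holds and generalizes immediately to any signed graph with an equitable partition of this shape. Both proofs use the hypotheses in the same places — constancy of the between-block signs and completeness of $\Gamma[X_i]$ — so the two arguments are faithful to each other in substance if not in technique.
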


\begin{proof}
{Suppose that $X_1=\{v_1,\ldots,v_{n_1}\}$ and $X_i=\{v_{n_1+\cdots+n_{i-1}+1},\ldots,v_{n_1+\cdots+n_{i}}\}$, for $i=2,\ldots,p+q$. Then we have,

$$\lambda I-A(K_{n},\sigma)= \left[\begin{tabular}{c | c}
 $\lambda I-A(K_{n_1},+)$ &  $\ast$ \\ \hline
      $\ast$  &   $\ast$
\end{tabular}\right].$$

%$$= \left[\begin{tabular}{c c c | c}
% $\lambda$  &  & ${\bf -1}$  &  \\
 % & $\ddots$  &  & $C$ \\
 % ${\bf -1}$   &  &  $\lambda$  &  \\  \hline
 %& $\ast$  &  &  $\ast$
%\end{tabular}\right].$$

We apply finitely many elementary row and column operations on the matrix $\lambda I-A(K_n,\sigma)$. First, subtract the $n_1$th row from all the upper rows. This leads to the following matrix,

$$\left[\begin{tabular}{c c c c | c}
 $\lambda+1$  &  &   &  $-\lambda-1$ & \\
 $\bf 0$ & $\ddots$  & $\bf 0$ & $\vdots$ & $\bf 0$ \\
     &  &  $\lambda+1$  & $-\lambda-1$  &  \\
 & $\bf -1$  &  &  $\lambda$ & $\ast$ \\ \hline
  & & $\ast$ & & $\ast$
\end{tabular}\right].$$

Now, adding the first $n_1-1$ columns to $n_1$th column, we
obtain the following matrix,

$$\left[\begin{tabular}{c c c c | c}
 $\lambda+1$  &  &   &  $0$ & \\
 $\bf 0$ & $\ddots$  & $\bf 0$ & $\vdots$ & $\bf 0$ \\
     &  &  $\lambda+1$  & $0$  &  \\
 & $\bf -1$  &  &  $\lambda+1-n_1$ & $\ast$ \\ \hline
  & & $\ast$ & & $\ast$
\end{tabular}\right].$$

Thus the following holds:
$$\varphi (\Gamma,\lambda)=(\lambda +1)^{n_1-1}\det \left[\begin{tabular}{c | c}
 $\lambda+1-n_1$ &  $\ast$ \\ \hline
      $\ast$  &   $\ast$
\end{tabular}\right].$$

By repeating the same procedure, one can deduce that
$$\varphi (\Gamma,\lambda)=(\lambda +1)^{m_1-p}(\lambda -1)^{m_2-q}\varphi (B,\lambda),$$
as desired.
}
\end{proof}

\begin{lm}
\label{qst}
Let $\Gamma=(K_n,Q(s,t)^-)$, $s,t\geq 1$, be a signed complete graph with $k$ negative edges, where $Q(s,t)$ is the graph depicted in Fig. \ref{figure1}.
Then $$\varphi (\Gamma,\lambda)=(\lambda +1)^{n-7} \Big(\lambda^7 +(7-n)\lambda^6+(21-6n)\lambda ^5+(12k-15n+4ku+8st-13)\lambda ^4 +$$
$$(48k-20n+16ku+32st-157)\lambda^3 +(113n-56k-8ku-16st(u-1)-267)\lambda^2+$$
$$(250n-208k-48ku-32st(u+1)-185)\lambda+127n-116k-28ku+24st(2u-1)-47\Big),$$
where $u=n-k$.
\end{lm}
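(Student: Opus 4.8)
The plan is to invoke Theorem~\ref{charpoly} with an explicit equitable partition of $V(\Gamma)$ into seven cells, mirroring the role that the double-star reduction played in Lemma~\ref{q1}. Writing $v_1,\dots,v_4$ for the vertices of the $4$-cycle $v_1v_2v_3v_4$ of $Q(s,t)$, with $v_1$ carrying the $s$ pendants and $v_2$ carrying the $t$ pendants, I would set $X_i=\{v_i\}$ for $i=1,2,3,4$, let $X_5$ be the set of $s$ pendant neighbours of $v_1$, let $X_6$ be the set of $t$ pendant neighbours of $v_2$, and let $X_7$ be the set of $u=n-k$ vertices outside $Q(s,t)$.

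First I would check that $\sqcap=\{X_1,\dots,X_7\}$ satisfies the hypotheses of Theorem~\ref{charpoly}: each $X_i$ induces a positive clique (every negative edge of $\Gamma$ lies on the cycle or joins a centre to one of its own pendants, so none lies inside a single cell), and between any two cells all edges carry one sign, namely negative exactly for the cycle pairs $\{X_1,X_2\},\{X_2,X_3\},\{X_3,X_4\},\{X_4,X_1\}$ and the centre--pendant pairs $\{X_1,X_5\},\{X_2,X_6\}$, and positive otherwise. Hence $p=7$, $q=0$, $m_1=n$, $m_2=0$, and Theorem~\ref{charpoly} gives $\varphi(\Gamma,\lambda)=(\lambda+1)^{n-7}\varphi(B,\lambda)$, where, using $b_{ii}=n_i-1$ and $b_{ij}=\pm n_j$ according to the signs above, the quotient matrix is
$$B=\begin{pmatrix}
0 & -1 & 1 & -1 & -s & t & u\\
-1 & 0 & -1 & 1 & s & -t & u\\
1 & -1 & 0 & -1 & s & t & u\\
-1 & 1 & -1 & 0 & s & t & u\\
-1 & 1 & 1 & 1 & s-1 & t & u\\
1 & -1 & 1 & 1 & s & t-1 & u\\
1 & 1 & 1 & 1 & s & t & u-1
\end{pmatrix}.$$

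It then remains to evaluate the degree-$7$ characteristic polynomial $\varphi(B,\lambda)=\det(\lambda I-B)$ and to rewrite the result via $u=n-k$ and $k=s+t+4$. I would reduce the determinant by elementary operations that exploit the near-identical last three rows and columns: subtracting the $X_7$ row and column from the $X_5$ and $X_6$ ones pulls the sizes $s,t,u$ out of most entries and collapses the bulk of the matrix, leaving a small explicit determinant to expand. The main obstacle is purely computational bookkeeping, namely correctly tracking the mixed terms $4ku$ and $8st$ and their multiples in the lower-degree coefficients; these arise only from the $3\times 3$ and higher principal minors that couple the two pendant cells $X_5,X_6$ with the cycle, so the positions of the pendants on the cycle genuinely enter here. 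As a cheap consistency check one verifies that $\operatorname{tr}B=s+t+u-3=n-7$ reproduces the $\lambda^6$ coefficient $7-n$, and that the sum of the $2\times 2$ principal minors collapses (the $s^2,t^2,u^2$ contributions cancel) to $21-6n$, matching the stated $\lambda^5$ coefficient; these checks make the remaining expansion routine, if tedious.
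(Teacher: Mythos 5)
Your proposal follows exactly the paper's route: the paper uses the identical seven-cell partition ($X_i=\{v_i\}$ for $i\le 4$, the $s$ pendants of $v_1$, the $t$ pendants of $v_2$, and the $u$ outside vertices), invokes Theorem~\ref{charpoly} to get $\varphi(\Gamma,\lambda)=(\lambda+1)^{n-7}\varphi(B,\lambda)$, and then computes $\varphi(B,\lambda)$. Your explicit quotient matrix is correct --- I checked each row against the sign pattern of $Q(s,t)$ (negative blocks exactly for the cycle pairs and the two centre--pendant pairs), and your consistency checks are sound: $\operatorname{tr}B=s+t+u-3=n-7$ gives the $\lambda^6$ coefficient, and the sum of $2\times 2$ principal minors works out to $-6-4(s+t+u)+3-2(s+t+u)=21-6n$, as you claim. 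In this respect your write-up is actually more explicit than the paper, which only asserts ``it is not hard to see'' that $\varphi(B,\lambda)$ has the stated form.

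There is, however, one genuine gap: you implicitly assume $u=n-k\geq 1$. The lemma as stated allows $k=n$, in which case $X_7$ is empty, the partition has only six nonempty cells, and your bookkeeping $p=7$, $m_1-p=n-7$ fails; Theorem~\ref{charpoly} then yields $(\lambda+1)^{n-6}\varphi(B',\lambda)$ with a $6\times 6$ quotient matrix $B'$. The paper treats this case separately and verifies that $(\lambda+1)\varphi(B',\lambda)$ coincides with the stated degree-$7$ factor at $u=0$, so that the single displayed formula covers both situations. Since the lemma is later applied in Theorem~\ref{theoremX} where $k\leq n$ (including $k=n$), this boundary case is not vacuous and your proof needs the corresponding extra paragraph; everything else is the paper's argument, carried out correctly.
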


\begin{proof}
{Assume that $k< n$. Suppose that $X_1=\{v_1\}$, $X_2=\{v_2\}$, $X_3=\{v_3\}$, $X_4=\{v_4\}$, $X_5=N_{Q(s,t)}(v_1)\setminus \{v_2, v_4\}$, $X_6=N_{Q(s,t)}(v_2)\setminus \{v_1, v_3\}$, and $X_7=V(K_n)\setminus V(Q(s,t))$, see Fig. \ref{figure1}.
Let $B$ be the quotient matrix of $A(\Gamma)$ related to $\{X_1,\ldots,X_7\}$.
It is not hard to see that the characteristic polynomial of $B$ is
$$\varphi (B,\lambda)= \lambda^7 +(7-n)\lambda^6+(21-6n)\lambda^5+(12k-15n+4ku+8st-13)\lambda ^4 +$$
$$(48k-20n+16ku+32st-157)\lambda^3 +(113n-56k-8ku-16st(u-1)-267)\lambda^2+$$
$$(250n-208k-48ku-32st(u+1)-185)\lambda+127n-116k-28ku+24st(2u-1)-47,$$
where $u=n-k$. By Theorem \ref{charpoly}, we have
$\varphi (\Gamma,\lambda)=(\lambda +1)^{n-7}\varphi (B,\lambda).$
If $k=n$, let $X_1,\ldots,X_6$ be as above and $B'$ be the quotient matrix of $A(\Gamma)$ related to $\{X_1,\ldots,X_6\}$. Then
$\varphi (\Gamma,\lambda)=(\lambda +1)^{n-6}\varphi (B',\lambda).$ So one can deduce that
$$\varphi (\Gamma,\lambda)=(\lambda +1)^{n-6}\Big(\lambda^6 +(6-n)\lambda^5+(15-5n)\lambda^4+(2n+8st-28)\lambda ^3 +$$
$$\ \ \ \ \ \ \ \ \ \ \ \ \ \ \ \ (26n+24st-129)\lambda^2 +(31n-8st-138)\lambda+
11n-24st-47\Big),$$
and hence
$$\varphi (\Gamma,\lambda)=(\lambda +1)^{n-7}\Big(\lambda^7 +(7-n)\lambda^6+(21-6n)\lambda^5+(-3n+8st-13)\lambda ^4 +$$
$$(28n+32st-157)\lambda^3 +(57n+16st-267)\lambda^2+
(42n-32st-185)\lambda+11n-24st-47\Big).$$
This completes the proof.}
\end{proof}

\begin{corollary}\label{q1st}
Let $\Gamma=(K_n,Q(s,t)^-)$, $s,t\geq 1$, and $\Gamma'=(K_n,Q_1^-)$ be two signed complete graphs with $k$ negative edges, where $Q(s,t)$ and $Q_1$ are the graphs depicted in Fig. \ref{figure1}. Then $\lambda_1(\Gamma) <\lambda_1(\Gamma^\prime).$
\end{corollary}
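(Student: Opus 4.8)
The plan is to compare the two characteristic polynomials supplied by Lemmas \ref{q1} and \ref{qst} directly, reducing the inequality $\lambda_1(\Gamma) < \lambda_1(\Gamma')$ to a sign condition at a single point. Write $\varphi(\Gamma',\lambda) = (\lambda+1)^{n-5}P(\lambda)$ and $\varphi(\Gamma,\lambda) = (\lambda+1)^{n-7}R(\lambda)$, where $P$ is the quintic from Lemma \ref{q1} and $R$ the septic from Lemma \ref{qst}. Since the index of each graph exceeds $-1$, the number $\lambda_1(\Gamma')$ is the largest root of $P$ and $\rho := \lambda_1(\Gamma)$ is the largest root of $R$. The first step is to put both factors in a common degree by setting $\tilde P(\lambda) := (\lambda+1)^2 P(\lambda)$, so that $\tilde P$ and $R$ are monic of degree $7$, and then to compute $R(\lambda) - \tilde P(\lambda)$. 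I expect the three top coefficients to cancel and, after collecting the remaining terms, to find that every coefficient difference carries the factor $st$, giving $R - \tilde P = 8st\,g(\lambda)$ for an explicit quartic $g$. The decisive feature is that this quartic factors: isolating its $u$-dependent part yields $g(\lambda) = (\lambda-1)(\lambda+3)\bigl[(\lambda+1)^2 - 2u\bigr]$, where $u = n-k$.

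With this identity the argument is short. Evaluating at $\rho$ and using $R(\rho)=0$ gives $\tilde P(\rho) = -8st\,g(\rho)$; since $s,t\ge 1$ and $\rho>-1$, once $g(\rho)>0$ is known we get $P(\rho)<0$ (dividing by $(\rho+1)^2>0$). Because $P$ is monic and tends to $+\infty$, a negative value at $\rho$ forces $\rho$ to lie strictly below the largest root of $P$, that is, $\lambda_1(\Gamma) < \lambda_1(\Gamma')$. Thus the whole problem collapses to proving $g(\rho)>0$, and by the factorization this amounts to the two inequalities $\rho>1$ and $(\rho+1)^2>2u$.

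Both of these follow from lower bounds on $\rho=\lambda_1(\Gamma)$ obtained through the Interlacing Theorem (Theorem \ref{Interlac}). The $u=n-k$ vertices outside $Q(s,t)$ induce a positive clique $(K_u,+)$ of index $u-1$, so $\rho\ge u-1$; and since $s,t\ge 1$, a pendant at $v_1$, a pendant at $v_2$ and the vertex $v_3$ are pairwise joined by positive edges, giving a positive triangle and hence $\rho\ge 2$. The bound $\rho\ge 2$ already yields $\rho>1$ and also $(\rho+1)^2\ge 9>2u$ whenever $u\le 4$, while $\rho\ge u-1$ yields $(\rho+1)^2\ge u^2>2u$ whenever $u\ge 3$; these two ranges cover all $u$, so $(\rho+1)^2>2u$ in every case and $g(\rho)>0$.

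The step I expect to be most error-prone is the polynomial subtraction producing $g$: the cancellation of the top three coefficients and the emergence of the common factor $st$ are delicate, and a single slip would corrupt the quartic. That step is nonetheless purely mechanical. The genuine content lies in recognizing that $g$ factors so cleanly as $(\lambda-1)(\lambda+3)\bigl[(\lambda+1)^2-2u\bigr]$, since this is exactly what turns the interlacing bounds into an immediate determination of the sign of $g(\rho)$, after which the conclusion is automatic.
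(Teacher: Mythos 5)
Your proposal is correct and takes essentially the same route as the paper: both subtract the characteristic polynomials of Lemmas \ref{q1} and \ref{qst} to get $\varphi(\Gamma',\lambda)-\varphi(\Gamma,\lambda)=-8st(\lambda+1)^{n-7}\bigl(\lambda^4+4\lambda^3-2(u-1)\lambda^2-4(u+1)\lambda+3(2u-1)\bigr)$ (your $g$ is exactly this quartic, and your factorization $(\lambda-1)(\lambda+3)\bigl[(\lambda+1)^2-2u\bigr]$ checks out), then evaluate at $\lambda_1(\Gamma)$ and use interlacing plus monicity to conclude. The only cosmetic difference is the lower bound on $\lambda_1(\Gamma)$: the paper uses the induced $(K_{n-3},+)$ (the three vertices $v_1,v_2,v_3$ cover all negative edges of $Q(s,t)$) to get $\lambda_1\geq n-4>u+1$, while you use the weaker bounds $\rho\geq 2$ (positive triangle) and $\rho\geq u-1$ (outside clique), which suffice precisely because of your explicit factorization.
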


\begin{proof}
{Let $\lambda_1=\lambda_1(\Gamma)$. By Lemmas \ref{q1} and \ref{qst}, we deduce that
$$\varphi(\Gamma^\prime,\lambda) -\varphi(\Gamma ,\lambda) =-8st(\lambda+1)^{n-7}\big(\lambda^4+4\lambda^3-2(n-k-1)\lambda^2-4(n-k+1)\lambda+$$
$$\ \ \ \ \ \ \ \ \ \  \ \ \ \ \ \ \ \ \ \ \ \ \ \ \   3(2n-2k-1)\big).$$
Setting $\lambda=\lambda_1$, we have
$$\varphi(\Gamma^\prime,\lambda_1)=-8st(\lambda_1+1)^{n-7}\big(\lambda_1^4+4\lambda_1^3-2(n-k-1)\lambda_1^2
-4(n-k+1)\lambda_1+3(2n-2k-1)\big).$$
Since $\Gamma$ has $(K_{n-3}, + )$ as an induced subgraph, by Theorem \ref{Interlac}, we conclude that
$n-k+1<n-4\leq \lambda_1.$ Therefore, $\varphi(\Gamma^\prime,\lambda_1(\Gamma))<0$ which implies that $\lambda_1(\Gamma) <\lambda_1(\Gamma^\prime).$
}
\end{proof}

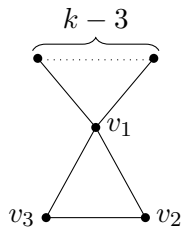
\begin{figure}[h!]
\centering
		\begin{minipage}[b]{.4\textwidth}
		\centering
		\begin{tikzpicture}[scale=1.2, baseline={(0, -1.15)}]
			\draw (0.55,0)--(-0.55,0)--(0,1);
			\draw (0.55,0)--(0,1);
			\begin{scope}[shift={(0,1)}]
				\draw \foreach \x in {50,130}{(\x:1)--(0:0)};
				\fill[black] \foreach \x in {50,130} {(\x:1) circle (0.05)};
				\node at (90:1.2) {\small $k-3$};
            \draw [decorate,decoration={brace,amplitude=5pt,mirror},xshift=0pt,yshift=0pt](.7,.85) -- (-.7,.85) node[black,midway,yshift=-.3cm] {\footnotesize $ $};
			\end{scope}
            \draw [dotted] (.6, 1.75)-- (-.6, 1.75);
			\fill[black] (0.55,0) circle (0.05) node[right]{\small $v_2$};
			\fill[black] (0,1) circle (0.05) node[right]{\small $v_1$};
			\fill[black] (-0.55,0) circle (0.05) node[left]{\small $v_3$};
			%\node at (1.5,0.6) {\small $k=n_1'+3$};
			%\node at (0.3,-0.7) {${{U}_{1}}\qquad$};
		\end{tikzpicture}
		%\caption*{${{\dot{M}}_{1}}\qquad$}
		\end{minipage}
\caption{The unicyclic graph $U_1$.}
\label{figure2}
	\end{figure}

By a proof similar to the proof of Lemma \ref{qst}, we have the following lemma.

\begin{lm}
\label{u1}
Let $\Gamma=(K_n,U_1^-)$ be a signed complete graph with $k\geq 3$ negative edges, where $U_1$ is the graph depicted in Fig. \ref{figure2}.
Then $$\varphi (\Gamma,\lambda)=(\lambda +1)^{n-5} (\lambda -1) \Big(\lambda^4 +(6-n)\lambda^3+(16-5n)\lambda ^2+(4k-11n+4ku+18)\lambda +$$
$$\ \ \ \ \ \ \ \ \ \ \ \ \ \ \ \ \ \ \ \ \ \ \ \ \ \ \ \ \ \ \ 28k-31n+12ku+7\Big),$$
where $u=n-k$.
\end{lm}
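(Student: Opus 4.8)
The plan is to follow the proof of Lemma~\ref{qst} step for step in structure: exhibit a sign-uniform equitable partition of $V(\Gamma)$ and read off $\varphi(\Gamma,\lambda)$ from Theorem~\ref{charpoly}. Label the triangle of $U_1$ as $v_1v_2v_3$ with the $k-3$ pendant vertices attached at $v_1$ (Fig.~\ref{figure2}). Assuming first that $k<n$, I would take $\sqcap=\{X_1,X_2,X_3,X_4\}$ where $X_1=\{v_1\}$, $X_2=\{v_2,v_3\}$, $X_3=N_{U_1}(v_1)\setminus\{v_2,v_3\}$ is the set of $k-3$ pendants, and $X_4=V(K_n)\setminus V(U_1)$ collects the $u=n-k$ remaining vertices. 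Grouping $v_2$ with $v_3$ exploits their symmetric role: the only negative edge lying inside a block is $v_2v_3$, so $\Gamma[X_2]=(K_2,-)$ while $\Gamma[X_1],\Gamma[X_3],\Gamma[X_4]$ are positive cliques, and a direct check shows every inter-part edge block is sign-uniform (the off-diagonal negative blocks being precisely the $X_1$--$X_2$ and $X_1$--$X_3$ blocks, which carry the triangle edges at $v_1$ and the pendant edges). Thus $\sqcap$ meets the hypotheses of Theorem~\ref{charpoly} with $p=3$, $q=1$, $m_1=n-2$, $m_2=2$.

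Next I would record the $4\times4$ quotient matrix. A positive clique $K_{n_i}$ contributes the diagonal entry $n_i-1$, a negative clique contributes $-(n_i-1)$, and an off-diagonal block contributes $+|X_j|$ or $-|X_j|$ according as the edges between $X_i$ and $X_j$ are positive or negative. This yields
$$B=\begin{bmatrix} 0 & -2 & -(k-3) & u \\ -1 & -1 & k-3 & u \\ -1 & 2 & k-4 & u \\ 1 & 2 & k-3 & u-1 \end{bmatrix}.$$
Expanding $\varphi(B,\lambda)=\det(\lambda I-B)$ produces the quartic in the statement; useful checkpoints are $\operatorname{tr}(B)=n-6$ (matching the coefficient $6-n$ of $\lambda^3$), the sum of the $2\times2$ principal minors of $B$ (matching $16-5n$), and $\det B=28k-31n+12ku+7$ (the constant term). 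Theorem~\ref{charpoly} then gives $\varphi(\Gamma,\lambda)=(\lambda+1)^{m_1-p}(\lambda-1)^{m_2-q}\varphi(B,\lambda)=(\lambda+1)^{n-5}(\lambda-1)\varphi(B,\lambda)$, which is exactly the asserted identity.

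Finally I would dispose of the boundary case $k=n$, precisely as in Lemma~\ref{qst}: now $X_4=\emptyset$, the partition shrinks to $\{X_1,X_2,X_3\}$, and Theorem~\ref{charpoly} outputs $(\lambda+1)^{n-4}(\lambda-1)\varphi(B',\lambda)$ for the corresponding $3\times3$ quotient matrix $B'$. One then checks that $(\lambda+1)\varphi(B',\lambda)$ equals the quartic evaluated at $u=0$; equivalently, setting $u=0$ makes $\lambda=-1$ a root of the quartic, which absorbs the discrepancy of one in the exponent of $(\lambda+1)$ and recovers the stated formula. The only genuinely delicate point in the whole argument is the sign bookkeeping in assembling $B$, since a single misclassified inter-part block would corrupt every coefficient of the quartic; the $4\times4$ determinant expansion itself is routine.
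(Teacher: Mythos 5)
Your proposal is correct and follows exactly the route the paper intends: the paper proves Lemma \ref{u1} ``by a proof similar to the proof of Lemma \ref{qst}'', i.e., a sign-uniform equitable partition plus Theorem \ref{charpoly}, and I have checked that your quotient matrix $B$ and all coefficients (trace $n-6$, second coefficient $16-5n$, $\lambda$-coefficient $4k-11n+4ku+18$, and $\det B=28k-31n+12ku+7$) match the stated quartic, with the $k=n$ case resolved exactly as you describe since $\lambda=-1$ is a root of the quartic at $u=0$. Your only (harmless) deviation is merging $v_2,v_3$ into one negative $K_2$ block, which makes the $(\lambda-1)$ factor fall out of Theorem \ref{charpoly} directly instead of emerging from a larger quotient matrix with singleton parts as in Lemma \ref{qst}.
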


Now, we can state the main result of this paper.

\begin{theorem}\label{theoremX}
 Among all signed complete graphs of order $n>5$ whose negative edges induce a unicyclic graph of order $k$ and maximizes the index, the negative edges induce a triangle with all remaining vertices being pendant at the same vertex of the triangle.
\end{theorem}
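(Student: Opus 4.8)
The plan is to fix a unicyclic graph $U$ of order $k$ for which $\Gamma=(K_n,U^-)$ attains the maximum index, together with an eigenvector $X=(x_v)$ for the index $\lambda=\lambda_1(\Gamma)$, and to deform $U$ by the relocations $R(u,v,w)$ of Lemma \ref{rotation} until it must coincide with $U_1$. The first step is to record that $\lambda$ is large: the $n-k$ vertices outside $U$ induce $(K_{n-k},+)$, so Theorem \ref{Interlac} already gives $\lambda\ge n-k-1$, and for the explicit comparison graphs below, which admit a vertex cover of size three, the same argument yields the sharper bound $\lambda\ge n-4$ used in Corollary \ref{q1st}. Throughout I would read off the components of $X$ from the eigenvalue equations $\lambda x_v=\sum_{u\in N(v)}\sigma(uv)x_u$, using this largeness to order them.

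The core of the argument is a sequence of relocations, each of which keeps the negative subgraph unicyclic of order $k$, so that any strict increase of $\lambda$ contradicts maximality. First I would collapse the tree part: if $U$ has a pendant $w$ whose neighbor $u$ is not on the unique cycle $C$, then for a cycle vertex $c$ the relocation $R(w,c,u)$ reattaches $w$ as a pendant at $c$ without decreasing $\lambda$, so after repetition we may take the maximizer so that every vertex off $C$ is a pendant adjacent to $C$; that is, $U$ is a cycle carrying brooms. I would then concentrate the brooms: transferring a pendant $p$ from a cycle vertex $v_2$ to a cycle vertex $v_1$ is exactly the relocation $R(p,v_1,v_2)$, so all pendants can be gathered at a single cycle vertex. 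For a four-cycle this concentration is precisely Corollary \ref{q1st}, which shows $\lambda_1(K_n,Q(s,t)^-)<\lambda_1(K_n,Q_1^-)$, and I would invoke it as the unconditional instance of this step.

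Next I would shorten the cycle. If the cycle $v_1v_2\cdots v_g$ has length $g\ge 4$ with the broom at $v_1$, then $v_1v_3$ is a positive edge and $v_2v_3$ a negative one, so the relocation $R(v_3,v_1,v_2)$ deletes $v_2v_3$ and inserts $v_1v_3$; the result is again a cycle with a broom at $v_1$, now of length $g-1$ and with $v_2$ turned into one further pendant at $v_1$. Iterating this drives $g$ down to $3$, i.e.\ to $U_1$. To make the decisive passage from $g=4$ to $g=3$ independent of any eigenvector hypothesis, I would compare the characteristic polynomials of $(K_n,U_1^-)$ and $(K_n,Q_1^-)$ from Lemmas \ref{u1} and \ref{q1}: evaluating their difference at $\lambda=\lambda_1(K_n,Q_1^-)$ and using $\lambda\ge n-4$ to pin down its sign, exactly as in Corollary \ref{q1st}, yields $\lambda_1(K_n,Q_1^-)<\lambda_1(K_n,U_1^-)$. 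Together these stages identify the maximizer as $U_1$.

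The main obstacle is verifying the hypotheses of Lemma \ref{rotation} at each relocation, namely the sign of $x_u$ and the ordering of $x_v$ and $x_w$. Since the index eigenvector of a signed graph is not governed by Perron--Frobenius, its entries need not share a sign, and the pattern depends genuinely on how close $k$ is to $n$: the broom vertex $v_1$, incident to $k-1$ negative edges, may have a component of either sign. I would control this by combining the equitable-partition description of Theorem \ref{charpoly} with the eigenvalue equations and the largeness of $\lambda$ to order the components at the cycle and broom vertices, and wherever the resulting inequality is inconclusive I would fall back on the explicit polynomial comparisons of Lemmas \ref{q1}, \ref{qst}, \ref{u1} and Corollary \ref{q1st}. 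This is precisely why the concentrated-versus-split and triangle-versus-four-cycle comparisons are established by hand rather than left to the relocation argument, and why the hypothesis $n>5$ is needed to keep $\lambda$ large enough for these sign determinations to go through.
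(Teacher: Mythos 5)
Your outline reproduces the paper's strategy at the top level (relocations from Lemma \ref{rotation}, interlacing lower bounds on $\lambda_1$, and the explicit polynomial comparisons of Lemmas \ref{q1}, \ref{qst}, \ref{u1} and Corollary \ref{q1st}), but it has a genuine gap exactly where the paper does its real work: you never verify the hypotheses of Lemma \ref{rotation} for any of your structural moves, and the fallback you propose is unavailable at the stage where you need it. Lemma \ref{rotation} is conditional on sign and order data for the eigenvector ($x_u\geq 0$ and $x_v\leq x_w$, or the reverse), and these conditions can fail in \emph{both} directions, in which case no relocation in either direction is licensed; moreover a non-strict conclusion (``without decreasing $\lambda$'') yields no contradiction with maximality, so ``repeat the move'' is not justified. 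Your declared remedy --- ``fall back on the explicit polynomial comparisons'' --- cannot substitute for this, because characteristic polynomials are computed only for the three terminal shapes $Q_1$, $Q(s,t)$, $U_1$; for an arbitrary unicyclic $U$ there is no such formula, so the reduction from arbitrary $U$ to these finitely many candidates \emph{must} come from eigenvector analysis. That analysis is the bulk of the paper's proof: showing some cycle vertex has $x_i\neq 0$ (via the eigenvalue equation and $\lambda_1\geq n-k$, which is the correct bound from the induced $(K_{n-k+1},+)$, not your $n-k-1$); deriving the sign pattern $x_1>0$, $x_2,x_g<0$ that forces $g=4$; and the degree-$2$ and star conditions that pin down $Q_1$, $Q(s,t)$, $U_1$ --- all by contradiction against a \emph{strict} increase, not by performing moves on the maximizer.

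A concrete failure point: your cycle-shortening relocation $R(v_3,v_1,v_2)$ requires $x_3\geq 0$, $x_1\leq x_2$ or $x_3\leq 0$, $x_1\geq x_2$; but the paper's analysis of the $g=4$ maximizer establishes $x_1,x_3>0$ and $x_2,x_4<0$, so neither alternative can hold --- which is precisely why the paper never shortens the cycle by relocation and instead compares $(K_n,Q_1^-)$ with $(K_n,U_1^-)$ through their characteristic polynomials. That comparison is not the formality your sketch suggests: the paper splits into $k\geq 5$ (using $\lambda_1\geq n-3$ from the size-two vertex cover $\{v_1,v_3\}$ of $Q_1$, sharper than your uniform $n-4$), $k=4$ with a root analysis valid only for $n>7$, and computer checks for $n=6,7$; and the inequality actually \emph{reverses} for $n=4,5$, where $\lambda_1(K_n,U_1^-)<\lambda_1(K_n,C_4^-)$. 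Since your uniform argument makes no use of $n>5$ beyond vague ``largeness'' of $\lambda$, it would equally ``prove'' the statement in these small cases where it is false --- a clear sign that the deferred sign determinations cannot be routine and must be carried out as the paper does.
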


\begin{proof}
{Suppose that $k$ negative edges induce a unicyclic graph $U$ and maximizes the index, where $k\leq n$. Let $\Gamma=(K_{n},U^-)$ and $\lambda_1=\lambda_1(\Gamma)$. Clearly,
$(K_{n-k+1}, +)$ is an induced subgraph of $\Gamma$. Hence by Theorem \ref{Interlac}, we deduce that $n-k\leq \lambda_1$.
Suppose $U$ consists of the cycle, say $C$, of length $g$ and a certain number of trees attached at vertices of $C$. Let $V(U)=\{v_1,\ldots,v_k\}$ and $V(C)=\{v_1,\ldots,v_g\}$. Let $X=(x_1,\ldots,x_n)^T$ be an eigenvector corresponding to $\lambda_1$.
% \smallskip

We claim that there is an integer $i$, $1 \leq i \leq g$, such that $x_i\neq 0$. To see this, by contrary assume that $x_i=0$, for $i=1,\ldots,g$. Let $v_p$ and $v_{q}$ be two consecutive vertices of $C$, that is $v_pv_q\in E(C)$, and suppose that two vertices $v_j$ ($g<j\leq k$) and $v_p$ are adjacent in $U$.
If $x_j\neq 0$, then by Lemma \ref{rotation}, the relocation $R(v_j,v_q,v_p)$ would contradict the maximality of $\lambda_1.$ By the same argument, one can prove that $x_i=0$ for $i=1,\ldots,k$. Thus $k< n$. Let $x_t$ be an entry of $X$ having the
largest absolute value. We may assume that $x_t>0$ (otherwise, consider $-X$ instead of $X$). By the eigenvalue equation for $v_t$, we have $\sum_{i=k+1, i\neq t}^{n}x_i=\lambda_1x_t$ which implies that $\lambda_1\leq n-k-1$, a contradiction. The claim is proved.
%\smallskip

Suppose that $g>3$. Without loss of generality, assume that $x_1>0$. If $x_{g-1} \leq x_2$, then the possibility of $R(v_1,v_{g-1},v_2)$ contradicts the maximality of $\lambda_1$. So $x_2<x_{g-1}$.
Also, $x_g<x_3$ since otherwise, the relocation $R(v_1,v_3,v_g)$ leads to a contradiction. Now, if $x_2\geq 0$ (resp. $x_g\geq 0$), then the relocation $R(v_2,v_g,v_3)$ (resp. $R(v_g,v_2,v_{g-1})$) contradicts the maximality of $\lambda_1$. Hence, $x_2, x_g<0$. Thus for $g\geq 5$, if $x_{g-1}\geq x_3$ (resp. $x_3\geq x_{g-1}$), then by $R(v_2,v_{g-1},v_3)$ (resp. $R(v_g,v_3,v_{g-1})$), we get a contradiction. Therefore, $g=4$.
%\smallskip

From the previous statement, we have $x_1>0$ and $x_2, x_4<0$. Also, $x_3>0$ since otherwise, the relocation $R(v_3,v_1,v_2)$ concludes a contradiction. Suppose that $v_1v_p, v_3v_q\in E(U)$, where $p,q>4$. If $x_p\geq 0$ (resp. $x_q\geq 0$), then the relocation $R(v_p,v_2,v_1)$ (resp. $R(v_q,v_4,v_3)$) contradicts the maximality of $\lambda_1$. Thus $x_p, x_q<0$. So if $x_3\geq x_1$ (resp. $x_1\geq x_3$), then by $R(v_p,v_3,v_1)$ (resp. $R(v_q,v_1,v_3)$), we find a contradiction. It follows that $\deg_U (v_1)=2$ or $\deg_U (v_3)=2$. Similarly, if $v_2v_p, v_4v_q\in E(U)$ and $p,q>4$, then we have a contradiction and hence $\deg_U (v_2)=2$ or $\deg_U (v_4)=2$. Without loss of generality, assume that $\deg_U (v_3)=\deg_U (v_4)=2$.
%\smallskip

Now, we prove that the trees attached to vertices $v_1, v_2$ in $U$, if any, are stars.
For proving this, first assume that $v_1v_p, v_pv_q\in E(U)$, where $p, q>4$. If $x_p\geq 0$, then the relocation
$R(v_p,v_2,v_1)$ gives a contradiction and hence $x_p<0$. If $x_q\leq 0$, then the relocation $R(v_q,v_1,v_p)$ leads to a contradiction. Thus $x_q>0$. Therefore, if $x_2\leq x_p$ (resp. $x_p\leq x_2$), then by $R(v_q,v_2,v_p)$ (resp. $R(v_3,v_p,v_2)$), we get a contradiction. This implies that $v_p$ is a pendant vertex of $U$. By repeating the same procedure one can easily prove that the tree which is attached to the vertex $v_2$ in $U$, if any, is a star. So we conclude that $U$ may be equal to the graphs $Q_1$ or $Q(s,t)$, depicted in Fig. \ref{figure1}. By Corollary \ref{q1st}, $\lambda_1(K_n,Q(s,t)^-)<\lambda_1(K_n,Q_1^-)$ and hence $U\neq Q(s,t)$.
%\smallskip

Next, suppose that $g=3$. We first prove that at least two vertices of the cycle $C$ have degree $2$ in $U$. By contrary assume that $v_1v_p, v_2v_q\in E(U)$, where $p,q>3$. If $x_1=x_2=0$, then $R(v_p,v_2,v_1)$ implies that $x_p=0$. Hence by $R(v_3,v_p,v_1)$, we deduce that $x_3=0$, a contradiction. Without loss of generality, assume that $x_1>0$. If $x_q\leq x_2$ (resp. $x_q\leq x_3$), then the relocation $R(v_1,v_q,v_2)$ (resp. $R(v_1,v_q,v_3)$) contradicts the maximality of $\lambda_1$. Thus $x_q>x_2, x_3$. Therefore, $x_3>0$ (by $R(v_3,v_q,v_2)$). If $x_1\leq x_2$, then $R(v_q,v_1,v_2)$ yields a contradiction. So $x_2<x_1$ and hence $x_p<0$ (by $R(v_p,v_2,v_1)$). Now, $R(v_3,v_p,v_1)$ gives the final contradiction. This completes the assertion.

We now prove that the tree attached to a vertex of $C$ in $U$, if any, is a star. By contrary assume that $v_1v_p, v_pv_q\in E(U)$, where $p,q>3$. Without loss of generality, assume that $x_q\geq 0$ (otherwise, consider $-X$ instead of $X$). If $x_1<x_p$, then the relocation $R(v_q,v_1,v_p)$ contradicts the maximality of $\lambda_1$. Hence $x_p\leq x_1$. Thus the relocations $R(v_2,v_p,v_1)$ and $R(v_3,v_p,v_1)$, respectively,
imply that $x_2\leq 0$ and $x_3\leq 0$. Therefore, by $R(v_2,v_q,v_3)$, we deduce that $x_2=x_3=x_q=0$. Finally, the relocation $R(v_1,v_q,v_2)$ yields $x_1=0$ which is impossible, and we are done.

So the candidates for maximizers are: $\Gamma=(K_{n},U_1^-)$, and $\Gamma'=(K_{n},Q_1^-)$, see Figs. \ref{figure1} and \ref{figure2}. We just have to compare the indices. By Lemmas \ref{q1} and \ref{u1}, we find that
$$\varphi(\Gamma,\lambda) -\varphi(\Gamma^\prime ,\lambda) =-8(\lambda+1)^{n-5}\big((k-5)\lambda^2+2(n-5)\lambda+12n-11k-2k(n-k)-5\big).$$
Let $\lambda_1=\lambda_1(\Gamma')$. Since $\Gamma'$ has $(K_{n-2}, + )$ as an induced subgraph, by Theorem \ref{Interlac}, we conclude that $\lambda_1\geq n-3$. Setting $\lambda=\lambda_1$, we have
$$\varphi(\Gamma,\lambda_1)=-8(\lambda_1+1)^{n-5}\big((k-5)\lambda_1^2+2(n-5)\lambda_1+12n-11k-2k(n-k)-5\big).$$
First assume that $k\geq 5$. Thus $(k-5)\lambda_1^2+2(n-5)\lambda_1+12n\geq (k-5)(n-3)^2+2(n-5)(n-3)+12n=kn^2+26n+9k-3n^2-6kn-15$. It is not hard to see that $kn^2+2k^2+26n>3n^2+8kn+2k+20$ which implies that $kn^2+26n+9k-3n^2-6kn-15>11k+2k(n-k)+5$. Hence $\varphi(\Gamma,\lambda_1)<0$ and so $\lambda_1(\Gamma')<\lambda_1(\Gamma)$. This is exactly what we need here. Note that if $n=k=5$, by switching $\Gamma'$ at vertex $v_3$, we have $\Gamma' \sim \Gamma$ and hence $\lambda_1(\Gamma')=\lambda_1(\Gamma)$.
Finally, assume that $k=4$. Thus
 $$\varphi(\Gamma,\lambda_1)=8(\lambda_1+1)^{n-5}\big(\lambda_1^2-2(n-5)\lambda_1-4n+17\big).$$
The roots of $\lambda^2-2(n-5)\lambda-4n+17$ are $n-5\pm \sqrt{n^2-6n+8}$.
%Note that by \cite[{Theorem} $2.5$]{Akbari1}, we have $\lambda_1\leq n-1$.
If $n>7$, then $n-5-\sqrt{n^2-6n+8}< n-3\leq \lambda_1\leq n-1<n-5+\sqrt{n^2-6n+8}$, so $\varphi(\Gamma,\lambda_1)<0$ which yields that $\lambda_1(\Gamma')<\lambda_1(\Gamma)$. By a computer search, one can see that
the result holds for $n=6,7$, but $\lambda_1(K_{4},U_1^-)<\lambda_1(K_{4},C_4^-)$ and $\lambda_1(K_{5},U_1^-)<\lambda_1(K_{5},C_4^-)$.
The proof is complete.}
\end{proof}

The following is a direct consequence of Theorem \ref{theoremX} which confirms the  Koledin-Stani\'c conjecture for signed
complete graphs whose negative edges induce a unicyclic graph.

\begin{corollary}\label{u1st}
Let $(K_n,U^-)$ be a signed complete graph whose negative edges induce a unicyclic graph $U$ of order $k<n$. Then $\lambda_1(K_n,U^-) <\lambda_1(K_n,K_{1,k}^-).$
\end{corollary}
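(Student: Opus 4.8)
The plan is to obtain this corollary from Theorem \ref{theoremX} by a single characteristic-polynomial comparison. By Theorem \ref{theoremX}, for $n>5$ every signed complete graph $(K_n,U^-)$ whose negative edges induce a unicyclic graph $U$ of order $k$ satisfies $\lambda_1(K_n,U^-)\le \lambda_1(K_n,U_1^-)$, where $U_1$ is the graph of Fig.~\ref{figure2}. Hence it suffices to establish the single strict inequality $\lambda_1(K_n,U_1^-) < \lambda_1(K_n,K_{1,k}^-)$; chaining the two then gives the claim for an arbitrary unicyclic $U$. The finitely many remaining instances with $3\le k<n\le 5$ (where $U_1$ need not be the unicyclic maximizer) are checked directly, exactly as in the small-case verification at the end of the proof of Theorem \ref{theoremX}.

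For the decisive comparison I would set $\nu=\lambda_1(K_n,U_1^-)$ and evaluate the star polynomial at $\nu$. Since $\varphi(K_n,K_{1,k}^-,\lambda)=\det(\lambda I-A)$ is monic with only real roots, it is strictly positive for every $\lambda>\lambda_1(K_n,K_{1,k}^-)$; therefore showing $\varphi(K_n,K_{1,k}^-,\nu)<0$ immediately forces $\nu<\lambda_1(K_n,K_{1,k}^-)$, which is precisely what is wanted (this is the sign argument already used in Corollary \ref{q1st}). To compute $\varphi(K_n,K_{1,k}^-,\nu)$ I would subtract the polynomials of Lemmas \ref{Kr,s} and \ref{u1}. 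After writing both with the common factor $(\lambda+1)^{n-5}$, the two degree-$n$ polynomials agree in their top three coefficients, so the difference collapses to
\[
\varphi(K_n,K_{1,k}^-,\lambda)-\varphi(K_n,U_1^-,\lambda)=(\lambda+1)^{n-5}\,D(\lambda),
\]
where $D(\lambda)=8\big((1-k)\lambda^2+2(n+1-2k)\lambda+2kn-2k^2+3k-4n+1\big)$. Because $\varphi(K_n,U_1^-,\nu)=0$, evaluating at $\nu$ yields $\varphi(K_n,K_{1,k}^-,\nu)=(\nu+1)^{n-5}D(\nu)$, and since $\nu>0$ its sign is the sign of $D(\nu)$.

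It remains to show $D(\nu)<0$, and this is the step I expect to be the main obstacle. The input is the interlacing bound: $(K_n,U_1^-)$ contains $(K_{n-2},+)$ as an induced subgraph (the $n-k$ non-$U_1$ vertices together with the $k-3$ pendants and one triangle vertex span a positive clique), so Theorem \ref{Interlac} gives $\nu\ge n-3$. The quadratic factor of $D$ is concave in $\lambda$, since its leading coefficient $1-k<0$ for $k\ge 3$, and its vertex lies at $(n+1-2k)/(k-1)$, which one verifies is smaller than $n-3$ for all $k\ge 3$ and $n>k$ (reducing to $(k-2)(n-1)>0$). Hence on $[\,n-3,\infty)$ this quadratic is strictly decreasing, so it suffices to confirm negativity of the bracketed quadratic at the left endpoint $\lambda=n-3$; substituting gives $(3-k)n^2+(4k-14)n-2k^2+6k+4$.

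A short case split then finishes the estimate: for $k=3$ this reduces to $-2n+4<0$ for all $n\ge 4$, while for $k\ge 4$ it is a concave quadratic in $n$ whose vertex lies below $n=k+1$, so its maximum over the admissible range $n\ge k+1$ is attained at $n=k+1$, where it equals $-k^3+3k^2+k-7<0$. Consequently $D(\nu)\le D(n-3)<0$, giving $\varphi(K_n,K_{1,k}^-,\nu)<0$ and therefore $\nu<\lambda_1(K_n,K_{1,k}^-)$, which together with the reduction above completes the proof. The principal technical point is the uniform-in-$k$ bookkeeping of the two concavity/vertex estimates; everything else is the routine polynomial algebra behind the displayed expression for $D$.
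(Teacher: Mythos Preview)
Your proposal is correct and follows essentially the same route as the paper: reduce an arbitrary unicyclic $U$ to $U_1$ via Theorem~\ref{theoremX}, compute $\varphi(K_n,K_{1,k}^-,\lambda)-\varphi(K_n,U_1^-,\lambda)$ from Lemmas~\ref{Kr,s} and~\ref{u1}, use interlacing to get $\nu\ge n-3$, and show the difference is negative at $\nu$; the small cases $n\le 5$ are dispatched separately in both. Your final estimate is in fact tidier than the paper's---you exploit concavity and monotonicity of the quadratic on $[n-3,\infty)$ and evaluate only at the left endpoint, whereas the paper splits into the subcases $n>2k-1$ and $n\le 2k-1$ and additionally invokes the upper bound $\lambda_1\le n-1$---but this is a cosmetic difference, not a different method.
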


\begin{proof}
{Suppose that $\Gamma=(K_n,K_{1,k}^-)$ and $\Gamma'=(K_n,U_1^-)$, where $U_1$ is the graph depicted in Fig. \ref{figure2}. Let $\lambda_1=\lambda_1(\Gamma')$. Since $\Gamma'$ has $(K_{n-2}, + )$ as an induced subgraph, by Theorem \ref{Interlac}, we conclude that $\lambda_1\geq n-3$. By Lemmas \ref{Kr,s} and \ref{u1}, we deduce that
$$\varphi(\Gamma,\lambda) -\varphi(\Gamma' ,\lambda) =-8(\lambda+1)^{n-5}\big((k-1)\lambda^2-2(n-2k+1)\lambda+$$
$$\ \ \ \ \ \ \ \ \ \ \ \ \ \ \ \ \ \ \ \ \ \ \ \ \ \ \ \ \ \ \ \ \ \ \ \ \ \ \ \ \ \ \ \ \ \ \ \ \ \ \ \ \ \ \ \ \ \ \ \ \ \ \ \ \ \ \ \ 4n-3k-2k(n-k)-1\big).$$
Setting $\lambda=\lambda_1$, we have
$$\varphi(\Gamma,\lambda_1)=-8(\lambda_1+1)^{n-5}\big((k-1)\lambda_1^2-2(n-2k+1)\lambda_1+$$
$$\ \ \ \ \ \ \ \ \ \ \ \ \ \ \ \ \ \ \ \ \ \ \ \ \ \ \ \ \ \ \ \ \ \ \ \ \ \ \ \ \ \ \ \ \ \ \ \ \ \ \ \ \ \ \ \ \ \ 4n-3k-2k(n-k)-1\big).$$
If $k=3$, then $\varphi(\Gamma,\lambda_1)=-16(\lambda_1+1)^{n-4}\big(\lambda_1-n+4\big)$.
Thus $\varphi(\Gamma,\lambda_1)<0$ and hence $\lambda_1(\Gamma') <\lambda_1(\Gamma).$
Note that $n\geq 4$.

Now, assume that $k>3$. By a computer search, one can see that $\lambda_1(K_{5},U_1^-)<\lambda_1(K_{5},C_4^-)<\lambda_1(K_{5},K_{1,4}^-)$, so
the result holds for $n=5$. In what follows, we consider two cases.

$\bold{Case}$ 1: $n>2k-1$. We claim that $\varphi(\Gamma,\lambda_1)<0$. We have $(k-1)\lambda_1^2+4n\geq (k-1)(n-3)^2+4n=kn^2+10n+9k-n^2-6kn-9$. Since  $\lambda_1\leq n-1$, hence $2(n-2k+1)\lambda_1+3k+2k(n-k)+1 \leq 2(n-2k+1)(n-1)+3k+2k(n-k)+1=2n^2+7k-2k^2-2kn-1$. It is easy to see that $kn^2+2k^2+10n+2k>3n^2+4kn+8$ which yields that $kn^2+10n+9k-n^2-6kn-9>2n^2+7k-2k^2-2kn-1$. The claim is proved. Therefore, $\lambda_1(\Gamma') <\lambda_1(\Gamma).$

$\bold{Case}$ 2: $n\leq 2k-1$. Then $(k-1)\lambda_1^2-2(n-2k+1)\lambda_1+4n\geq (k-1)(n-3)^2-2(n-2k+1)(n-3)+4n$. It is not hard to check that $kn^2+2k^2+14n>3n^2+6k+4kn+4$ which implies that $(k-1)(n-3)^2-2(n-2k+1)(n-3)+4n>3k+2k(n-k)+1$. Thus $\varphi(\Gamma,\lambda_1)<0$ and hence $\lambda_1(\Gamma') <\lambda_1(\Gamma).$

Therefore, by Theorem \ref{theoremX}, the proof is complete.
}
\end{proof}

\begin{figure}[h!]
\centering
		\begin{minipage}[b]{.4\textwidth}
		\centering
		\begin{tikzpicture}[scale=1.2, baseline={(0, -1.15)}]
			\draw (0,1)--(-1,0)--(-0.55,0)--(0,1)--(0.55,0)--(1,0)--(0,1);
			\draw [dotted] (-0.55,0)--(0.55,0);
			\begin{scope}[shift={(0,1)}]
				\draw \foreach \x in {50,130}{(\x:1)--(0:0)};
				\fill[black] \foreach \x in {50,130} {(\x:1) circle (0.05)};
				\node at (90:1.2) {\small $k-3t$};
            \draw [decorate,decoration={brace,amplitude=5pt,mirror},xshift=0pt,yshift=0pt](.7,.85) -- (-.7,.85) node[black,midway,yshift=-.3cm]
            {\footnotesize $ $};
            \draw [decorate,decoration={brace,amplitude=5pt,mirror},xshift=0pt,yshift=0pt](-1.1,-1.05) -- (1.1,-1.05) node[black,midway,yshift=-.3cm]
            {\footnotesize $ $};
            \node at (0,-1.4) {\small $2t$};
			\end{scope}
            \draw [dotted] (.6, 1.75)-- (-.6, 1.75);
			\fill[black] (0.55,0) circle (0.05) node[right]{\small $ $};
			\fill[black] (0,1) circle (0.05) node[right]{\small $ $};
			\fill[black] (-0.55,0) circle (0.05) node[left]{\small $ $};
            \fill[black] (-1,0) circle (0.05) node[right]{\small $ $};
			\fill[black] (1,0) circle (0.05) node[left]{\small $ $};
			%\node at (1.5,0.6) {\small $k=n_1'+3$};
			%\node at (0.3,-0.7) {${{U}_{1}}\qquad$};
		\end{tikzpicture}
		%\caption*{${{\dot{M}}_{1}}\qquad$}
		\end{minipage}
\caption{The cactus graph $G_t$.}
\label{figure3}
	\end{figure}
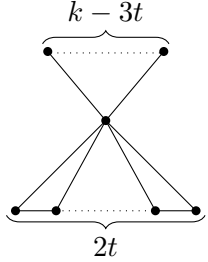

Let $G_t$ be the cactus graph with $k$ edges and $t$ cycles, depicted in Fig. \ref{figure3} ($G_0$ is the star graph $K_{1,k}$). We close this paper with the following conjecture.

\begin{conjecture}\label{con2}
 Among all signed complete graphs of order $n>5$ whose negative edges induce a cactus graph with $k$ edges and $t$ cycles ($k-t<n$), and maximizes the index, negative edges induce the graph $G_t$.
\end{conjecture}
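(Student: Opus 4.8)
The plan is to carry the proof of Theorem~\ref{theoremX} over from one cycle to $t$ cycles, preserving its two-phase shape: first drive the maximizing cactus $U$ into a narrow family by repeated use of the relocation $R(u,v,w)$ of Lemma~\ref{rotation}, and then separate the survivors by computing their characteristic polynomials from the equitable-partition formula of Theorem~\ref{charpoly} and comparing them with the interlacing bound of Theorem~\ref{Interlac}. Write $\Gamma=(K_n,U^-)$, $\lambda_1=\lambda_1(\Gamma)$, and let $X=(x_1,\dots,x_n)^T$ be an eigenvector for $\lambda_1$. Since a connected cactus with $k$ edges and $t$ cycles has exactly $k-t+1$ vertices, the $n-k+t-1$ vertices outside $U$, together with a pendant vertex of $U$, induce an all-positive $(K_{n-k+t},+)$; by Theorem~\ref{Interlac} this gives $\lambda_1\ge n-k+t-1$ (a slightly smaller clique serves when $U$ has no pendant vertex). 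This linear lower bound is the quantitative engine behind every relocation below.

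Phase one is the structural part, lifting the steps of Theorem~\ref{theoremX}. First, relocations run along any cycle of length $g\ge 5$ give a contradiction exactly as there, so every cycle of $U$ has length $3$ or $4$; since each such relocation touches only the cycle in question, the argument applies to each of the $t$ cycles, with only minor care where two cycles meet at a shared vertex. That the length is in fact $3$ is not purely structural: as in the unicyclic case, where $Q_1$ (girth $4$) is beaten only by the polynomial comparison with $U_1$, this will be forced in phase two. Second, every tree hanging off $U$ is a star, by the path-shortening relocations $R(v_q,\cdot,v_p)$ that collapse a pendant path of length $\ge 2$ to a pendant edge. After these reductions the blocks of $U$ are triangles, $4$-cycles, and pendant edges, with each hanging tree a star at a single vertex.

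The decisive new step is to show that all blocks are incident to one common vertex, which (once girth $3$ is known) forces $U\cong G_t$. For $t=1$ this is vacuous, but for $t\ge 2$ it is the crux. I would take $v$ with $|x_v|$ maximal, normalize $x_v>0$, first show that $v$ lies on every block, and then, assuming two cycles meet the remainder of $U$ at distinct cut vertices $a\ne b$, analyse the eigenvector signs and magnitudes on the two cycles and on the $a$--$b$ path in the block-cut structure and produce a single relocation that strictly raises $\lambda_1$. The main obstacle lies exactly here: with several cycles the optimal eigenvector admits far more sign patterns than in the unicyclic case, and eliminating them uniformly in $t$---not merely cycle by cycle---is what makes the statement a conjecture. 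An induction on $t$ that peels off one triangle at a time is the natural device, but it is unclear that the extremal class is preserved under deletion of a triangle, so the sliding arguments just described, rather than a clean induction, are likely to be required.

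Finally, once phase one and the common-vertex step leave $G_t$ as essentially the only candidate---apart from finitely many near-ties that the relocations separate only weakly, namely surviving $4$-cycles and blocks hung at a base vertex rather than at the apex---I would break the ties by explicit computation. For each candidate, Theorem~\ref{charpoly} yields $\varphi(\Gamma,\lambda)$ as a polynomial of bounded degree whose coefficients are explicit in $n,k,t$, exactly as in Lemmas~\ref{q1}, \ref{qst}, and \ref{u1}. Writing $\Gamma_t=(K_n,G_t^-)$ and forming the difference $\varphi(\Gamma,\lambda)-\varphi(\Gamma_t,\lambda)$ as in Corollary~\ref{q1st} and the closing comparison of Theorem~\ref{theoremX}, then evaluating at the competitor's index and invoking $\lambda_1\ge n-k+t-1$, should force a strict sign and hence $\lambda_1(\Gamma)<\lambda_1(\Gamma_t)$, with a finite list of small $(n,k,t)$ left to a computer check as in the unicyclic proof.
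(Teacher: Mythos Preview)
The statement you are addressing is Conjecture~\ref{con2}, and the paper does \emph{not} prove it. The paper's entire treatment of this statement is the single remark after it: ``We note that Conjecture~\ref{con2} is true for $t=0,1$, according to Theorem~\ref{theoremX} and a result of~\cite{Akbari3}.'' There is therefore no proof in the paper to compare your proposal against; the paper poses the problem and records only the two known cases.

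Your write-up is candid about this: you identify the common-vertex step for $t\ge 2$ as ``the crux,'' you note that the multiplicity of eigenvector sign patterns ``is what makes the statement a conjecture,'' and you observe that the natural induction on $t$ is obstructed because the extremal class need not be closed under deleting a triangle. These are exactly the obstacles; what you have written is a plausible research programme modelled on the proof of Theorem~\ref{theoremX}, not a proof. In particular, Phase~one does not go through as stated---the relocation arguments in Theorem~\ref{theoremX} that kill $g\ge 5$ and force $\deg_U(v_3)=\deg_U(v_4)=2$ use the global maximality of $\lambda_1$ and the specific sign structure of $X$ on a \emph{single} cycle, and once several cycles share vertices the chain of inequalities $x_2<x_{g-1}$, $x_g<x_3$, etc.\ can interact across blocks; ``only minor care where two cycles meet'' understates the difficulty. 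Likewise, the polynomial comparison in Phase~two presupposes that the surviving family is finite and admits equitable partitions of bounded size uniformly in $t$, which is not established. So the proposal should be read as an outline of where the difficulties lie, matching the paper's decision to leave the statement as a conjecture.
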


We note that Conjecture \ref{con2} is true for $t=0,1$, according to Theorem \ref{theoremX} and a result of \cite{Akbari3}.

%------------------------------------------------------------------------
\small

\end{document}